\numberwithin{equation}{section}
\newtheorem{theorem}{Theorem}[section]
\newtheorem{Lemma}{Lemma}[section]
\newcommand{\R}{\mathbb{R}}
\newcommand{\E}{\mathbb{E}}
\newcommand{\Ge}{\mathcal{L}}
\begin{document}

\title[Asymptotics for the ERW]{Asymptotic Analysis of the Elephant Random Walk}

\author{Cristian F. Coletti and Ioannis Papageorgiou}

\date{}

\address{
\newline
Cristian F. Coletti
\newline
UFABC - Centro de Matem\'atica, Computa\c{c}\~ao e Cogni\c{c}\~ao
\newline
Avenida dos Estados, 5001- Bangu - Santo André - São Paulo, Brasil.
\newline
e-mail:  cristian.coletti@ufabc.edu.br
\newline
\newline
Ioannis Papageorgiou
\newline
UFABC - Centro de Matem\'atica, Computa\c{c}\~ao e Cogni\c{c}\~ao
\newline
Avenida dos Estados, 5001- Bangu - Santo André - São Paulo, Brasil.
\newline
e-mail: papyannis@yahoo.com
}

\keywords{Elephant random walk; Recurrence; Transience; Replica-Mean Field.} 


\begin{abstract} 
In this work we study asymptotic properties of a long range memory random walk known as elephant random walk.  First we prove  recurrence and positive recurrence for the elephant random walk. Then, we establish the transience regime of the model. Finally, under the Poisson Hypothesis, we study the replica mean field limit for this random walk and we obtain an upper bound for the expected distance of the walker from the origin.
\end{abstract}

\date{}
\maketitle 
 
\section{Introduction.}

The asymptotic behavior of random walks with long range memory has been extensively studied over the last years. In particular, the so called elephant random walk (ERW) has raised a considerable interest in the last four years. The ERW was introduced by G. Sch\"utz and S. Trimper \cite{GT} as an example of a non-Markovian process  where it is possible to compute exactly the mean and the variance of the random walk and which exhibits a phase transition from diffusive to superdiffusive behaviour. Independently, \cite{Baur16} and \cite{CGS} proved a strong law of large number and a central limit theorem for the ERW. Then, Bercu \cite{Bercu} obtained some refinements on the asymptotic behavior of the ERW. Indeed, most of the related work on limit theorems of random walks with memory can be subdivided into two categories: the study of limit theorems such as law of large numbers, central limit theorems and invariance principles, see for instance \cite{Baur16}, \cite{Bercu}, \cite{Bertoin}, \cite{CGS}, \cite{CGS2}, \cite{va} and references therein; and hypergeometric identities arising from this kind of processes, see \cite{Bercu1}.

In this paper we focus on the study of recurrence-transience properties for the elephant random walk  as well as on the study of its replica mean field limit under the Poisson Hypothesis. The Poisson hypothesis is an assumption introduced by Kleinrock \cite{K} to justify that some approximations to a given stochastic process become exact in the limit. In our context assuming the Poisson hypothesis amounts to asymptotic independence between replicas.

The paper is organized as follows. In section \ref{defmr} we define the model and state the main results of this work. In section \ref{rectran}  we establish the recurrence (respectively transience) property for the ERW. We also discuss the positive recurrence of our model. Finally, in section \ref{Kanto},  we study the replica mean field limit for the elephant random walk and we obtain an upper bound for the expected distance of the walker from the origin.

\section{Definition of the ERW and main results} \label{defmr}

The ERW is defined as follows. The walk starts at $X_0:=0$ 
at time $n=0$. At each discrete time step the elephant moves one step to the right or to the left respectively, so 
\begin{equation}
X_{n+1} = X_n + \eta_{n+1}
\end{equation}
where $\eta_{n+1} = \pm 1$ is a random variable. The memory consists of the set of random variables $\eta_{n^{\prime}}$
at previous time steps which the elephant remembers as follows:

\noindent $(D_1)$ At time $n+1$ a number $n^{\prime}$ from the set $\{1,2, \ldots , n\}$ is chosen according to a probability mass function $\phi_n$.

\noindent $(D_2)$ $\eta_{n+1}$ is determined stochastically by the rule 
\begin{align*}
\eta_{n+1} = \eta_n^{\prime} \, \text{ with probability } \, p \, \text{ and } \, \eta_{n+1} = -\eta_n^{\prime} \, \text{ with probability } 1-p.
\end{align*}

\noindent $(D_3)$ The elephant starting at $X_0$ moves to the right with probability $r$ and to the left with probability $1-r$, i.e.,
\begin{align*}
\eta_{1} = 1  \, \text{ with probability } \, r \, \text{ and } \, \eta_{1} = -1 \, \text{ with probability } 1-r.
\end{align*}

\noindent It is obvious from the definition that
\begin{equation}
X_n = \sum_{k=1}^n \eta_k.
\end{equation}
Let $\mathbb{P}$ denotes the law of the ERW departing from the origin at time $0$ and let $\mathbb{E}$ denotes the corresponding expectation operator.

\subsection{The uniform case}
In this paper we focus on the case where $\phi_n(i)=\frac{1}{n}$ which we call the uniform case. A simple computation yields 
\begin{equation}\label{conditional}
\mathbb{P}[\eta_{n+1} = \eta|\eta_1, \ldots, \eta_n] = \frac{1}{2n} \sum_{k=1}^n \left[1+\left(2p-1\right)\eta_k \eta\right] \ \mbox{for} \ n \geq 1,
\end{equation}
where $\eta = \pm 1$. For $n=0$ we get in accordance with rule $(D_3)$
\begin{equation} \label{firststep} 
\mathbb{P}[\eta_1 = \eta] = \frac{1}{2}  \left[1+\left(2r-1\right)\eta\right]
\end{equation}
and
\begin{equation}
\mathbb{E}[\eta_1] = 2r- 1.
\end{equation}
The conditional expectation of the increment $\eta_{n+1}$ given its previous history is given by
\begin{equation}\label{conditional2} 
\mathbb{E}[\eta_{n+1} |\eta_1, \ldots, \eta_n] = (2p-1) \frac{X_n}{n} \ \mbox{for} \ n \geq 2.
\end{equation}
A straightforward computation using equations (\ref{conditional}) and (\ref{firststep}) gives

\begin{eqnarray} \label{kernel1}
\mathbb{P}[X_{n+1}=x + y | X_n=x] = \left\{
        \begin{array}{ll}
            \frac{x y (2p-1)+n}{2n} & \quad n \geq 1 \\
            r \delta_1(y) + (1-r) \delta_{-1}(y) & \quad n = 0
        \end{array}
    \right.
\end{eqnarray}

where $y \in \{-1, 1\}$ and $\delta_n (.)$ is the Dirac measure centred on some fixed point $n$. Therefore, $\pi_n(x,x+y) = \mathbb{P}[X_{n+1}=x+y | X_n=x]$ with $y \in \{-1,1\}$ defines a temporal inhomogeneous one-step transition kernel on $\mathbb{Z}$.

\subsection{The generator}
Let $\mathcal{F}_b\left(\mathbb{Z}\right)$ denote the collection of all bounded measurable functions $f: \mathbb{Z} \rightarrow \mathbb{R}$. The generator at time $n$ is the linear operator $\mathcal{L}_n : \mathcal{F}_b\left(\mathbb{Z}\right) \rightarrow \mathcal{F}_b\left(\mathbb{Z}\right)$ defined by

\begin{equation}\label{thegenerator}
 \left(\mathcal{L}_n f\right)(x) := \sum_{y \in \{-1,1\}} \left(f(x+y) - f(x)\right) \pi_n(x,x+y)
\end{equation}
for any $f \in \mathcal{F}_b\left(\mathbb{Z}\right)$.

From now on we refer to any $f \in \mathcal{F}_b\left(\mathbb{Z}\right)$ as a test function. The following lemma is crucial in what follows. Its proof follows from a simple computation and so is omitted.

\begin{Lemma} \label{generator} Let $\left(X_n\right)_{n \geq 0}$ be the ERW with full memory. Then, for any $n \geq 1$ and for any test function $f$ its generator takes the form 
\begin{eqnarray*}
\mathcal{L}_n f (x ) =  \frac{x (2p-1)+n}{2n} \left( f ( x + 1  ) - f(x) \right) + \frac{x (1-2p)+n}{2n} \left( f ( x - 1 ) - f(x) \right).
\end{eqnarray*}
\end{Lemma}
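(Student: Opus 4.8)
The plan is to evaluate the definition \reff{thegenerator} directly by substituting the explicit one-step transition probabilities recorded in \reff{kernel1}, so no real machinery is required. First I would unfold the sum over $y \in \{-1,1\}$ in \reff{thegenerator} into its two summands, writing
\begin{equation*}
\left(\mathcal{L}_n f\right)(x) = \bigl(f(x+1)-f(x)\bigr)\,\pi_n(x,x+1) + \bigl(f(x-1)-f(x)\bigr)\,\pi_n(x,x-1).
\end{equation*}

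Next I would read off the two transition probabilities from \reff{kernel1}. Taking $y=1$ gives $\pi_n(x,x+1)=\frac{x(2p-1)+n}{2n}$, while taking $y=-1$ gives $\pi_n(x,x-1)=\frac{-x(2p-1)+n}{2n}$; the only point worth tracking is the sign flip produced by the factor $xy$, which turns $-x(2p-1)$ into $x(1-2p)$, so that $\pi_n(x,x-1)=\frac{x(1-2p)+n}{2n}$. Inserting these two expressions into the displayed sum reproduces exactly the claimed formula, valid for every $n \geq 1$, which is precisely the regime in which \reff{kernel1} supplies these probabilities.

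Since the argument is essentially a one-line substitution, there is no genuine obstacle to overcome; the only things demanding a small amount of care are correctly bookkeeping the sign arising from the $y=-1$ term and restricting attention to $n\geq 1$, as the $n=0$ case of \reff{kernel1} is instead governed by the initial rule $(D_3)$ and is therefore excluded from the statement.
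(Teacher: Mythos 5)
Your proof is correct and is exactly the ``simple computation'' the paper alludes to but omits: substituting the kernel \reff{kernel1} into the definition \reff{thegenerator} and tracking the sign of the $y=-1$ term. Nothing further is needed.
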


Having determined the exact expression of the generator of the ERW we can now calculate its value for specific test functions. This will be the subject of the following  lemma.

\begin{Lemma} \label{rec_lem} 
Let $\left(X_n\right)_{n \geq 0}$ be the ERW with full memory. Then, for any $n \geq 1$ we have
$$
\mathcal{L}_n \vert x \vert = \left\{
        \begin{array}{ll}
            \frac{2p-1}{n} \vert x \vert & \quad x \neq 0 \\
            1 & \quad x = 0.
        \end{array}
    \right.
$$
\end{Lemma}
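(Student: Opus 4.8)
The plan is to apply Lemma \ref{generator} directly to the test function $f(x) = |x|$ and then simplify by a case analysis on the sign of $x$. Since the walk lives on $\mathbb{Z}$, the argument $x$ is always an integer, which is exactly what lets me evaluate $|x+1|$ and $|x-1|$ unambiguously in each regime, and this is the only point where any care is needed.

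First I would treat $x > 0$, so that $x \geq 1$ and hence $|x+1| - |x| = 1$ while $|x-1| - |x| = -1$. Substituting these two differences into the generator formula from Lemma \ref{generator}, the coefficients $\frac{x(2p-1)+n}{2n}$ and $\frac{x(1-2p)+n}{2n}$ combine so that the terms containing $n$ cancel, leaving $\frac{x(2p-1)}{n} = \frac{2p-1}{n}|x|$. By the obvious symmetry, for $x < 0$ (so $x \leq -1$) the signs of the two differences swap, giving $|x+1|-|x| = -1$ and $|x-1|-|x| = 1$; the identical cancellation then produces $\frac{-x(2p-1)}{n}$, which again equals $\frac{2p-1}{n}|x|$ because $|x| = -x$ in this range. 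Together these dispose of the $x \neq 0$ branch.

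It remains to check $x = 0$ separately, and this is the only genuinely distinct case. Here both neighbours satisfy $|x \pm 1| = 1$ whereas $|x| = 0$, and both drift coefficients collapse to $\frac{n}{2n} = \tfrac12$, so the generator evaluates to $\tfrac12 + \tfrac12 = 1$. I expect no real obstacle: the computation is routine once the absolute-value increments are recorded correctly in each sign regime. The conceptual content worth emphasising is that away from the origin the memory-driven drift governs the growth of $|x|$ through the factor $\frac{2p-1}{n}$, while at the origin the $p$-dependence cancels entirely and one is left with the unconditional value $1$, reflecting that from $0$ the walk necessarily increases $|x|$ by one at the next step.
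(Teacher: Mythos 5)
Your proposal is correct and follows essentially the same route as the paper: apply Lemma \ref{generator} to $f(x)=|x|$ and simplify, the only cosmetic difference being that the paper packages your two sign cases into the single identity $|x+y|-|x| = y\,\frac{x}{|x|}$ for $x \neq 0$, $y \in \{-1,1\}$. You additionally verify the $x=0$ branch explicitly, which the paper omits as immediate; that check is correct as well.
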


\begin{proof}
We only give the proof for $x \neq 0$. Let $h(x)=\vert x \vert $. Then observe that for any $x \neq 0$ and $y \in \{-1,1\}$ we have $\vert x+y \vert-\vert x \vert = y \frac{x}{\vert x \vert}$. Therefore, for any $x \neq 0$ we have
\begin{align*} \nonumber  
\mathcal{L}_n h(x)= &\frac{1}{2n}  \left((x(2p-1)+n) \frac{x}{\vert x \vert}+ (x(1-2p)+n) \frac{-x}{\vert x \vert}   \right) \\ =&   \frac{(2p-1)}{n} h(x) . 
\end{align*} 
\end{proof}

\section{Recurrence and transience} \label{rectran}

Now we present the main result of this section which establishes the recurrence and transience regimes of the ERW. The main obstacle in the study of the properties of recurrence and transience of the elephant random walk comes from the fact that this random walk is temporal inhomogeneous.  We remark that the property of being recurrent or transient for this specific random walk is determined exclusively by the value of $p$.

\begin{theorem} \label{newrec}
Let $\left(X_n\right)_{n \geq 0}$ be the ERW with full memory. Then, if $p \leq 3/4$ the ERW is recurrent.
\end{theorem}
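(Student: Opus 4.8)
The statement asserts return to the origin infinitely often, so I would first reduce it to showing that $\liminf_{n}\lvert X_n\rvert<\infty$ almost surely; since the walk is nearest neighbour and $X_n$ has the parity of $n$, control of $\liminf\lvert X_n\rvert$ together with irreducibility yields $X_n=0$ infinitely often. The plan is then to run a Foster--Lyapunov argument built on the generator of Lemma~\ref{generator}. The function $\lvert x\rvert$ is not itself a good Lyapunov function: by Lemma~\ref{rec_lem} it satisfies $\mathbb{E}[\lvert X_{n+1}\rvert\mid\mathcal{F}_n]=\lvert X_n\rvert\bigl(1+\tfrac{2p-1}{n}\bigr)$ off the origin, so for $p>1/2$ it carries a genuine outward drift. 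To counteract this drift I would test the generator against a strictly concave coercive function, the natural candidate being $f(x)=\log(1+x^2)$.

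Using Lemma~\ref{generator} one rewrites the generator as
\[
\mathcal{L}_n f(x)=\tfrac12\bigl(f(x+1)+f(x-1)-2f(x)\bigr)+\frac{(2p-1)x}{2n}\bigl(f(x+1)-f(x-1)\bigr),
\]
and a Taylor expansion gives, for large $\lvert x\rvert$,
\[
\mathcal{L}_n f(x)=\frac{2(2p-1)}{n}-\frac{1}{x^2}+O\bigl(\lvert x\rvert^{-3}\bigr).
\]
Here the outward drift $\tfrac{2(2p-1)}{n}$ produced by \eqref{conditional2} competes with the restoring term $-1/x^2$ coming from the concavity of $f$. This is exactly where the hypothesis enters: on the diffusive scale $\lvert x\rvert\asymp\sqrt n$ the two terms are of the same order $1/n$ and their sum is $\tfrac{4p-3}{n}+o(1/n)$, which is nonpositive precisely when $p\le 3/4$. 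Thus $f(X_n)$ is an (approximate) supermartingale on the space--time region $\{x^2\le n/(2(2p-1))\}$, and the condition $2(2p-1)\le1$, i.e. $p\le 3/4$, is what guarantees that a diffusive excursion cannot leave this region. I would turn this into a quantitative bound on the probability that the walk, started at height $x_0$, reaches height $R$ before returning to the origin, obtaining via optional stopping a bound of order $\log(1+x_0^2)/\log(1+R^2)\to0$ as $R\to\infty$, and conclude recurrence.

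The main obstacle is the temporal inhomogeneity: because the drift decays like $1/n$ rather than being of Lamperti type $c/\lvert x\rvert$, no time-independent function can be a global supermartingale, and the favourable region $\{x^2\le n/(2(2p-1))\}$ itself moves with $n$. Making the excursion estimate rigorous therefore requires either a genuine space--time Lyapunov function $g(x,n)$ (for instance a logarithm of $x^2+\lambda n$, whose time increment compensates the outward drift) together with a recurrence criterion for time-inhomogeneous chains, or an a priori estimate ensuring that the walk does not leave the diffusive region before returning. I expect the most delicate point to be the critical case $p=3/4$, where the leading terms cancel exactly at scale $\sqrt n$ and one must retain the next order; there I would replace $f$ by a sharper, iterated-logarithmic test function to recover a strictly negative drift and close the argument.
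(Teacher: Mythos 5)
Your route is genuinely different from the paper's, but it has a gap that, as written, cannot be closed. For context: the paper's proof uses no Lyapunov function at all. It invokes the law of the iterated logarithm for the ERW proved in \cite{Bercu} and \cite{CGS2}, namely $\limsup_n X_n/\sqrt{2n\ln\ln n}=(3-4p)^{-1/2}$ and the symmetric $\liminf$ statement for $p<3/4$ (with normalization $\sqrt{2n\ln n\ln\ln\ln n}$ at $p=3/4$), which give $\limsup_n X_n=+\infty$ and $\liminf_n X_n=-\infty$ a.s.; recurrence then follows since the walk is nearest neighbour. Your local computations are correct (the decomposition of $\mathcal{L}_n$, the expansion $\mathcal{L}_n f(x)\approx 2(2p-1)/n-1/x^2$ for $f(x)=\log(1+x^2)$, and the identification of $x^2\asymp n$ as the scale where $p=3/4$ is critical), but they only deliver a supermartingale property on the moving region $\{x^2\le n/(2(2p-1))\}$ (the case $p\le 1/2$ being unproblematic), while the optional-stopping bound $\log(1+x_0^2)/\log(1+R^2)$ requires the supermartingale property on the whole strip $\{0<|x|<R\}$ up to the (unbounded, random) exit time. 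This is not a technical footnote: it is the entire content of the theorem.

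Neither of your two proposed repairs works. The ``a priori estimate ensuring that the walk does not leave the diffusive region before returning'' is false: by the very LIL that governs the ERW, for every constant $C$ one has $X_n\ge C\sqrt{n}$ infinitely often a.s.\ when $1/2<p\le 3/4$, so excursions do leave every window $\{|x|\le C\sqrt{n}\}$. The space--time candidate $g(x,n)=\log(x^2+\lambda n)$ fails as well: writing $D=x^2+\lambda n$, its expected increment is to leading order
\[
\frac{1+\lambda}{D}-\frac{2x^2}{D^2}+\frac{2(2p-1)x^2}{nD},
\]
and for $x^2=An$ with $A$ large this is $\bigl((1+\lambda)-\tfrac{2A}{A+\lambda}+2(2p-1)A\bigr)/\bigl((A+\lambda)n\bigr)>0$, since the third term dominates. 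The structural reason is that for $|x|\gg\sqrt{n}$ the outward drift $(2p-1)|x|/n$ exceeds the Lamperti-critical size $c/|x|$, so no concave function of $x$, nor of $x^2+\lambda n$, can be a supermartingale out there; what is missing is precisely control of excursions above the $\sqrt{n}$ scale, which in the literature is supplied by the martingale analysis behind the LIL (Stout's LIL applied to $M_n=a_nX_n$ with $a_n\asymp n^{1-2p}$, whose predictable quadratic variation diverges exactly when $p\le 3/4$). The same objection applies, a fortiori, to deferring the critical case $p=3/4$ to an unspecified iterated-logarithmic test function. To obtain a proof, either quote the LIL as the paper does, or develop the martingale oscillation argument for $M_n$; the Lyapunov scheme alone, with any function of the type you propose, cannot be completed by optional stopping.
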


\begin{proof}
We begin by assuming that $p < 3/4$. The critical case $p=3/4$ will be discussed afterwards. We will prove that the event
\begin{equation} \label{rec}
\left[\liminf_{n \rightarrow +\infty} X_n = - \infty, \limsup_{n \rightarrow +\infty} X_n = + \infty \right] 
\end{equation}
occurs a.s.

In \cite{Bercu} (see also \cite{CGS2}) the author proved that 
\begin{equation}
\limsup_{n \rightarrow +\infty} \frac{X_n}{\sqrt{2n \ln \ln n}} = \frac{1}{\sqrt{3-4p}}.    
\end{equation}
Therefore, 
\begin{equation}
 \frac{X_n}{\sqrt{2n \ln \ln n}} \geq \frac{1}{2\sqrt{3-4p}} 
\end{equation}
for infinitely many $n$. 
Since $\sqrt{2n \ln \ln n}$ diverges, it follows that for any $M > 0$ there exist infinitely many $n$ such that
\begin{equation}
X_n \geq M.
\end{equation}
In this way we may conclude that 
\begin{equation}
\limsup_{n \rightarrow +\infty} X_n = + \infty \ \mbox{a.s.}
\end{equation}
In the same work it is proved that
\begin{equation}
\liminf_{n \rightarrow +\infty} \frac{X_n}{\sqrt{2n \ln \ln n}} = - \frac{1}{\sqrt{3-4p}}.    
\end{equation}
It follows immediately that
$\liminf_{n \rightarrow +\infty} X_n = - \infty$ a.s. Then, the event defined in (\ref{rec}) has probability one as claimed. In other words the random walk is recurrent for $p < 3/4$. The proof that the random walk is recurrent for the critical case $p=3/4$ is entirely analogous since for $p=3/4$ we have that
\begin{equation} \label{c1}
\limsup_{n \rightarrow +\infty} \frac{X_n}{\sqrt{2n \ln n \ln \ln \ln n}} = 1
\end{equation}
and
\begin{equation} \label{c2}
\liminf_{n \rightarrow +\infty} \frac{X_n}{\sqrt{2n \ln n \ln \ln \ln n}} = - 1 .
\end{equation}
For a proof of (\ref{c1}) and (\ref{c2}) see  \cite{Bercu} and \cite{CGS2}.
\end{proof}

Now we proceed to show that for $p<1/6$ the ERW is actually positive recurrent. In order to do so we need to introduce some notation. For $m \geq 1$, let $\mathbb{E}_x^m\left(  . \right) := \mathbb{E}_x^m\left[  .  | X_m = x \right]$.

\begin{theorem} \label{positiverecurrent} 
Let $\left(X_n\right)_{n \geq 0}$ be the ERW with full memory and let $\tau_0:=\inf\{n:X_n=0\}$.  If $p < 1/6, m \geq 1$ and $x \neq 0$, then
\begin{eqnarray*}
\E_x^m[\tau_0] \leq \frac{2}{1-6p}\vert x \vert + 1
\end{eqnarray*}
and the ERW with full memory is positive recurrent.
\end{theorem}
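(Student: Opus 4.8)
The natural strategy is a Foster--Lyapunov argument built on the generator computation in Lemma~\ref{rec_lem}. Since (\ref{kernel1}) shows that the transition probabilities depend on the past only through the current position, $(X_n)_{n\ge 0}$ is a (time-inhomogeneous) Markov chain, so under $\E_x^m$ one may work with the one-step generator directly. The plan is to exhibit a nonnegative function $V$ with $V(0)=0$ and a uniform drift estimate $\mathcal{L}_n V(x)\le -1$, valid for all $x\neq 0$ and all $n\ge 1$. Granting this, the stopped process $V(X_{n\wedge\tau_0})+\bigl((n\wedge\tau_0)-m\bigr)$ is an $\E_x^m$-supermartingale for $n\ge m$, because on $\{X_n\neq 0\}$ one has $\E[V(X_{n+1})\mid\mathcal{F}_n]-V(X_n)=\mathcal{L}_nV(X_n)\le -1$, which exactly offsets the unit increment of the clock term.

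Optional stopping together with monotone convergence (letting $n\to\infty$ is legitimate because $\tau_0<\infty$ a.s.\ by Theorem~\ref{newrec}, as $p<1/6\le 3/4$) then yields $\E_x^m[\tau_0-m]\le V(x)$. Taking $V(x)=\tfrac{2}{1-6p}|x|$ and absorbing the single boundary step at $\tau_0$ into the additive constant would produce a bound of the shape $\tfrac{2}{1-6p}|x|+1$. Positive recurrence follows at once: from state $0$ at any time $k$ the kernel (\ref{kernel1}) sends the chain to $\pm 1$ with probability $\tfrac12$ each, so the expected return time to $0$ equals $1+\E_{\pm1}^{\,k+1}[\tau_0]$, which is finite by the bound just established.

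The crux --- and the step I expect to be genuinely hard --- is the drift inequality $\mathcal{L}_nV(x)\le -1$. Lemma~\ref{rec_lem} gives $\mathcal{L}_n|x|=\frac{2p-1}{n}|x|$, which for fixed $x$ decays like $1/n$; consequently no multiple of $|x|$ alone can be bounded above by $-1$ uniformly in $n$, the obstruction being that as $n\to\infty$ the one-step kernel degenerates to the null-recurrent symmetric walk on $\mathbb{Z}$. The time-inhomogeneity must therefore be exploited quantitatively. A promising device is to bring in the quadratic test function, for which a computation in the spirit of Lemma~\ref{rec_lem} gives $\mathcal{L}_n x^2 = 1-\frac{2(1-2p)}{n}x^2$: the $n$-independent term $+1$ supplies precisely the uniform contribution that $|x|$ lacks, while $-\frac{2(1-2p)}{n}x^2$ reinforces the restoring drift at large $|x|$. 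Balancing the linear and quadratic pieces, and controlling the region where $|x|$ is small relative to $\sqrt{n}$ (where the diffusive part dominates and no net drift is available), is exactly where the threshold $p<1/6$ and the constant $\frac{2}{1-6p}$ should arise. Making this balance rigorous, rather than the optional-stopping bookkeeping, is the heart of the matter.
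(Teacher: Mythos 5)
You have correctly isolated the hard step, but the gap there is not merely ``hard'' --- it is unfillable as you have set it up, so the proposal as a whole collapses. You ask for a time-independent $V\ge 0$ with $V(0)=0$ and $\mathcal{L}_n V(x)\le -1$ for every $x\neq 0$ and every $n\ge 1$. No such $V$ exists. Indeed, by (\ref{kernel1}), for fixed $x$ the transition probabilities satisfy $\pi_n(x,x\pm 1)\to \tfrac12$ as $n\to\infty$, hence
\[
\mathcal{L}_n V(x)\ \longrightarrow\ \tfrac12\bigl(V(x+1)+V(x-1)\bigr)-V(x),
\]
and your uniform bound would force $V(x+1)-2V(x)+V(x-1)\le -2$ for all $x\neq 0$. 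Summing this second-difference inequality over $x=1,\dots,k$ gives $V(k+1)\le V(0)+(k+1)\bigl(V(1)-V(0)\bigr)-k(k+1)\to -\infty$, contradicting $V\ge 0$. This is the precise sense in which the walk degenerates to the symmetric null-recurrent walk: for large $n$ no fixed Lyapunov function has uniformly negative drift, so in particular no combination of $|x|$ and $x^2$ can realize your plan. Note also that $\mathcal{L}_n x^2=1-\frac{2(1-2p)}{n}x^2$ is \emph{positive} in the troublesome region $|x|\ll\sqrt{n}$, so adding the quadratic term pushes the drift the wrong way exactly where you need help; the threshold $p<1/6$ cannot emerge from such a balance.

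The missing idea, and the route the paper actually takes, is to let the Lyapunov function depend on time: the paper works with $Z_i=(i+m)\,|X_{i+m}|$. Differencing the weight contributes $+|X_{i+m}|$, while the factor $i+m+1$ amplifies the vanishing drift $\mathcal{L}_{i+m}|x|=\frac{2p-1}{i+m}|x|$ into an order-one multiple of $|X_{i+m}|$; since $|X_{i+m}|\ge 1$ strictly before $\tau_0$, the paper bounds the one-step drift of $Z_i$ by the negative constant $\frac{6p-1}{2}$ (this is where $p<1/6$ enters, via $\frac{3}{2}(2p-1)+1=\frac{6p-1}{2}<0$), and concludes with the discrete Dynkin formula applied to the truncated time $\tau_0^n$ of (\ref{eqnv}) together with Fatou's lemma. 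Your optional-stopping bookkeeping would survive essentially unchanged once the fixed $V$ is replaced by this time-dependent functional. Be aware, however, that the constants in this route are themselves delicate: since $2p-1<0$ and the amplification factor $1+\frac{1}{i+m}$ \emph{decreases} to $1$, a pointwise bound of the form $\bigl(1+\frac{1}{i+m}\bigr)(2p-1)\le \frac{3}{2}(2p-1)$ holds only when $i+m\le 2$, so obtaining a drift estimate that is genuinely uniform in time is the real crux of this approach as well.
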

 
\begin{proof} 
Let $f(x) = \vert x \vert $ . In lemma \ref{rec_lem} we proved that
\begin{equation} \label{pos_rec}
\mathcal{L}_n f(x) = \frac{(2p-1)}{n} \vert x \vert .
\end{equation}
for any $x \neq 0$. 

Now, let $Z_i = (i+m) |X_{i+m}|$ and $\mathcal{G}_i = \sigma \left(\eta_k , k=1, \ldots , i+m \right)$. Then, it follows from (\ref{pos_rec}) that 
\begin{eqnarray}
\mathbb{E}^m_x \left[ Z_{i+1} - Z_i | \mathcal{G}_i \right] &=& \mathbb{E}^m_x \left[ (i+m+1) |X_{i+m+1}| - (i+m) |X_{i+m}| | \mathcal{G}_i \right] \nonumber \\
&\leq& \frac{(i+m+1)}{i+m} \left(2p-1\right) |X_{i+m}|  + \mathbb{E}^m_x \left[ |X_{i+m}| | \mathcal{G}_i \right] \nonumber \\
&=& \frac{(i+m+1)}{i+m} \left(2p-1\right) |X_{i+m}|  + |X_{i+m}| \nonumber \\
&=& \left(\left(1 + \frac{1}{i+m}\right) \left(2p-1\right) + 1 \right)|X_{i+m}| \nonumber \\
&\leq& \left(\frac{3 \left(2p-1\right)}{2} +1\right) |X_{i+m}| .  \nonumber \\
&=& \frac{6p-1}{2} |X_{i+m}| .  \nonumber 
\end{eqnarray}

Now, for any stopping time $\tau$ and any discrete-time stochastic process $\left(Z_k \right)_{k \geq 0}$, define  
\begin{equation} \label{eqnv}
\tau^{n}:=\min \left\{n,\tau,\inf\{k\geq 0:Z_k\geq n\}\right\}. 
\end{equation}

It follows from the discrete Dynkin's formula and equation \ref{pos_rec} that, for any $x\neq 0$,
\begin{align*}
\E_x^m[Z_{\tau_0^n}]=&\E_x^m[Z_0]+\E_x^m\left[  \sum_{i=1}^{\tau_0^{n}}\left( \E_x^m[Z_i / \mathcal{G}_{i-1} ]-Z_{i-1} \right) \right]
\\  \leq &
\vert x \vert+ \frac{6p-1}{2} \E_x^m\left[  \sum_{i=1}^{\tau_0^{n}}\vert X_{i+m} \vert \right].
\end{align*}
Observe that $Z_{\tau_0^n}\geq 0$ and that $\vert X_i \vert \geq 1$ for $i < \tau_0^n$ in the sum above. Therefore,
\begin{equation*}
\E_x^m[\tau_0^n]\leq \frac{2}{(1-6p)}\vert x \vert + 1.
\end{equation*}
It follows from Fatou's lemma applied to the inequality above that
\begin{eqnarray*}
\E_x^m(\tau_0) &=& \E_x^m(\lim_{n \rightarrow +\infty}\tau^n_0) \\ 
&\leq& \liminf_{n \rightarrow +\infty} \E_x^m(\tau^n_0) \\
&\leq& \frac{2}{(1-6p)}\vert x \vert + 1.
\end{eqnarray*}
This finishes the proof.
\end{proof}

We finish this section by establishing the transient regime for the ERW.

\begin{theorem} 
Let $\left(X_n\right)_{n \geq 0}$ be the ERW with full memory. Then, if $p > 3/4$ the ERW is transient.
\end{theorem}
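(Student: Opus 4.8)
The plan is to establish transience by proving that $|X_n|\to\infty$ almost surely, which forces the walk to visit the origin only finitely often (the negation of the recurrence shown in Theorem \ref{newrec}). Since $|X_n|\le n$ and the one-step probabilities in \reff{kernel1} are genuine probabilities, the natural object is the process obtained by compensating the $n^{-1}$ drift exhibited in Lemma \ref{rec_lem}. From \reff{conditional2} one has $\mathbb{E}[X_{n+1}\mid \mathcal{F}_n]=(1+\tfrac{2p-1}{n})X_n$ for $n\ge 2$, where $\mathcal{F}_n=\sigma(\eta_1,\dots,\eta_n)$. I would therefore set $a_n=a_2\prod_{k=2}^{n-1}\tfrac{k}{k+2p-1}$ and $M_n=a_n X_n$, so that $(M_n)_{n\ge 2}$ is a martingale (note that at $X_n=0$ the identity $\mathbb{E}[\eta_{n+1}\mid\mathcal{F}_n]=(2p-1)X_n/n=0$ keeps this exact). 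By the asymptotics of ratios of Gamma functions, $a_n=\tfrac{\Gamma(n)}{\Gamma(n+2p-1)}\cdot\mathrm{const}\sim c\,n^{-(2p-1)}$, so a nontrivial limit of $M_n$ corresponds exactly to growth of $X_n$ of order $n^{2p-1}$.

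The first substantive step, and the place where the hypothesis $p>3/4$ enters, is to show that $M_n$ is bounded in $L^2$ precisely in this regime. For this I would use the recursion $\mathbb{E}[X_{n+1}^2\mid\mathcal{F}_n]=X_n^2(1+\tfrac{2(2p-1)}{n})+1$, obtained by expanding $(X_n+\eta_{n+1})^2$ with $\eta_{n+1}^2=1$ and applying \reff{conditional2}; solving it gives $\mathbb{E}[X_n^2]\sim C\,n^{2(2p-1)}$ exactly when $2(2p-1)>1$, that is $p>3/4$. Combined with $a_n^2\sim c^2 n^{-2(2p-1)}$ this yields $\sup_n\mathbb{E}[M_n^2]<\infty$, so $M_n$ converges almost surely and in $L^2$ to a limit $M_\infty$ with $\mathbb{E}[M_\infty^2]=\lim_n\mathbb{E}[M_n^2]>0$. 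Hence $X_n/n^{2p-1}\to L$ almost surely for a random variable $L$ proportional to $M_\infty$, and on the event $\{L\neq 0\}$ we obtain $|X_n|\to\infty$, so that $\mathbb{P}(X_n=0 \text{ infinitely often})\le\mathbb{P}(L=0)$. For the critical and subcritical cases $p\le 3/4$ the same martingale fails to be $L^2$-bounded ($\mathbb{E}[X_n^2]\sim n\ln n$ at $p=3/4$ and $\sim c\,n$ below), which is consistent with the recurrence established earlier.

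The hard part is then to rule out the event $\{L=0\}$, i.e.\ to prove $\mathbb{P}(L=0)=0$; the $L^2$ bound by itself only gives $\mathbb{E}[L^2]>0$, hence $\mathbb{P}(L\neq 0)>0$, which would establish transience merely with positive probability. To upgrade this to an almost sure statement I would argue that the law of $L$ has no atom at the origin. One route is to exploit the nondegeneracy of the conditional increments: since $|X_n|\le n$ always holds, both transition probabilities in \reff{kernel1} lie in $[1-p,p]\subset(0,1)$, so $M_{n+1}-M_n$ takes two values of opposite sign with probabilities bounded away from $0$ and $1$ uniformly, and one uses this to show that the limiting distribution cannot concentrate mass at $0$. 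An alternative is to invoke the known continuity of the law of the limiting variable for the superdiffusive ERW from \cite{Bercu} and \cite{CGS2}. Once $\mathbb{P}(L=0)=0$ is secured we conclude $|X_n|\to\infty$ almost surely, and transience follows. I expect the non-atomicity of $L$ at $0$ to be the only genuinely delicate point, the martingale construction and the second-moment computation being routine.
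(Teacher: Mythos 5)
Your martingale construction, the second-moment recursion $\mathbb{E}[X_{n+1}^2\mid\mathcal{F}_n]=X_n^2\bigl(1+\tfrac{2(2p-1)}{n}\bigr)+1$, and the conclusion that $M_n=a_nX_n$ is $L^2$-bounded exactly when $p>3/4$ are all correct, and this is the same skeleton as the paper's argument (the paper quotes these facts from \cite{Bercu} and \cite{CGS} rather than rederiving them). You have also correctly isolated the crux: $L^2$-boundedness only gives $\mathbb{E}[M_\infty^2]>0$, hence $\mathbb{P}(M_\infty\neq 0)>0$, i.e.\ transience with positive probability; everything hinges on $\mathbb{P}(M_\infty=0)=0$. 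The problem is that neither of your two proposed routes closes this step. Route (a) --- inferring absence of an atom at $0$ from the fact that the one-step probabilities lie in $[1-p,p]$ --- is not a valid principle: uniform non-degeneracy of the increments of a martingale does not prevent its a.s.\ limit from charging the origin (the normalized Galton--Watson martingale is the standard warning: non-degenerate increments at every step, yet the limit vanishes a.s.\ when the $Z\log Z$ condition fails). Moreover the increments of $M_n$ themselves shrink like $n^{-(2p-1)}$, so the fact that the two possible signs each occur with probability in $[1-p,p]$ gives no control on how much mass the limiting law puts at $0$; turning this heuristic into a proof is precisely the hard part, and you supply no mechanism for it.

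Route (b) is a misattribution: neither \cite{Bercu} nor \cite{CGS2} proves continuity, absolute continuity, or non-atomicity at $0$ of the law of the limit. Bercu establishes a.s.\ convergence, moment estimates and non-Gaussianity of the limit, but the non-degeneracy $M_\infty\neq 0$ a.s.\ is not in those papers. This is exactly where the paper brings in the ingredient your proposal lacks: the distributional identity $X_n \overset{d}{=} U_n^1-U_n^2$, where $U_n=(U_n^1,U_n^2)$ is a two-color P\'olya-type urn, taken from \cite{Baur16}, combined with Theorems $2.1$, $7.2$ and $7.4$ of \cite{Chauvin} on \emph{large} urns (the regime corresponding to $p>3/4$), which show that the centered and rescaled urn composition converges to a random vector admitting a density. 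This gives that $M_\infty$ admits a density, hence no atom at $0$, and transience follows. So your write-up matches the paper up to and including the reduction to non-atomicity, but the one genuinely delicate step is left open, and the references you point to do not contain it.
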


\begin{proof}
In \cite{Bercu} it was proved that if $p > 3/4$, then the sequence $(M_n)$ defined by $M_n = a_n X_n$ is a martingale with respect to the natural filtration. Here $(a_n)$ is a sequence such that $\lim_{n \rightarrow +\infty} n^{2p-1} a_n= \Gamma(2p)$. Then, there exists $N$ such that for any $n \geq N$, we have
\begin{equation}
\frac{X^2_n}{n} \geq C n^{4p-3} M^2_n
\end{equation}
for some positive constant $C$. Since $p > 3/4$, $n^{4p-3}$ diverges. In order to prove transience it only remains to show that $M^2_n$ is bounded away from zero for values of $n$ large enough. Since $M_n$ converges a.s. to a finite random variable $M$ (\cite{Bercu}, \cite{CGS}), it suffices to have $M \neq 0$. To conclude the proof we appeal to the well known relation between the ERW and P\'olya-types urns, see \cite{Baur16}. The authors observe that for $n \geq 1, X_n$ is equal in distribution to $U_n^1 - U_n^2$ where $U_n = \left(U_n^1 , U_n^2 \right)$ is the composition of a discrete-time urn with two colors. When $p > 3/4$ such urn is called large and its limit distribution is studied in \cite{Chauvin}. Theorems $2.1, 7.2$ and $7.4$ from that paper guarantee that $U_n$, after centering and re-scaling, converges to a random vector admitting a density which in turns guarantees that $M$ admits a density.
\end{proof}

\section{Replica Mean Field Limit for the ERW.}\label{Kanto}
 
We now focus in studying the Replica Mean Field (RMF) limit of the ERW, which allows us to  give an estimate of the expected distance of the walker from the origin 

Recently Baccelli and Taillefumer (see \cite{B-T}) used RMF limits to describe the stationary state of a system with a finite number of neurons. We refer the interested reader to the works \cite{r8},  \cite{r10},  \cite{r49} and  \cite{r60} for related and relevant works.
 
The main idea hidden behind the RMF approach  is to describe the moments of the invariant measure in terms of basic structural elements  of the process. In the case of the ERW the structural element is the probability $p$ of copying the past.
 
The results presented in the current section as well as the techniques used to obtain them follow closely the work in \cite{B-T}. Indeed, we refer the interested reader to \cite{B-T} and references therein for a detailed and comprehensive explanation  of the RMF approach. 

Now we proceed to study the distance of the walker from the origin in relation to time, i.e. $|X_n / n|$. Therefore, and without loss of generality we may assume that all the elephants only move on the non-negative integer lattice. Finally we define the finite  Replica Mean Field system consisting of  $M$ identical copies of an elephant random walk $X_n:=(X_{n_i}^{i})_{i=1}^{M}$ after $n=\sum_{i=1}^{M}n_i$ total jumps, where $n_i$ is the number of jumps given by the $i$-th elephant, $M > 1$ and $i\in \{1, \ldots , M\} $.  If $\eta_k^{i}$ denotes the $k$-th jump given by the $i$-th elephant where $k\in\{1,...,n_i\}$ and   $i\in \{1, ..., M\} $, then we have
\[
X^{i}_{n_i}=\sum_{k=1}^{n_i}\eta^{i}_k   \text{ \   with  \  }  X_0^{i}=0.
\]
Now we define the generator of the $M$-finite Replica Mean Field model. In analogy to the elephant random walk we consider an ERW with a non-homogeneous one-step transition kernel. The generator of the finite $M$-RMF process  $X_{n+1}$    in any test function $ f : \R^M  \to \R $  and $x \in \R_+^M$  takes the form    
\begin{align}
\Ge_n f (x ) = \sum_{i=1}^{M} \xi_i \sum_{j=1}^{M}\psi^{i}(j)\sum_{k=1 }^{n_j}   \phi^j_{n_j}(k) \sum_{y \in \{-1,1\}} p^{i,j}(y) \left[ f ( x + y e_i ) - f(x) \right] ,
\end{align}
where  $n=\sum_{i=1}^{M}n_i, x = (x_1,...,x_M)$ and $\{e_1, \ldots , e_M\}$ is the canonical basis of $\R^M$.

Here $\xi_i$ is the probability that the $i$-th elephant will be chosen to be the next one to make a step, $\phi^{i}(j)$ is the probability to choose the $j$-th elephant to determine how the $i$-th will move  and $\phi^j_{n_j}(k)$ denotes the probability to choose the $k$-th step, $k\leq n_j$, of the $j$-th elephant. Finally, $p^{i,j}(y)$ is the probability that the $i$-th elephant decides to give one step to the right or to the left (depending on wether $y=1$ or $y=-1$) according to rule ($D_2$) applied to the $j$-th elephant.

In this section we consider the following setting.
\begin{enumerate} [label=\roman*)]
\item The elephant that will move next  is chosen uniformly, i.e. $\xi(i)=\frac{1}{M}$.
\item If $j$ is the next elephant that will move, then the elephant  whose path will determine the next move of $j$ is chosen uniformly among the remaining elephants, i.e. $\psi^{i}(j) =\frac{1}{M-1}$ for any $j \neq i$. 
\item The process does not loose any of its memory. The step of the elephant selected in item $(ii)$ is chosen uniformly among all the previous steps   $k \leq n_j$, i.e. $ \phi^j_{n_j}(k)=\frac{1}{n_j}$.
\item The $i$-th elephant copies the step given in the past by the $j$-th elephant with probability $p$.
\end{enumerate}

In order to avoid trivial degeneracies at the beginning of the process and in the counting of the steps, we make the following assumption.

\begin{enumerate} [a)]

\item In order to avoid choosing an elephant that has not moved yet to be the one that will determine the next step of any other elephant we may assume,  without lose of generality, that at the beginning of the process all elephants make an initial jump from their starting position $0$, to $-1$ or $1$, with equal probability $\frac{1}{2}$.
\end{enumerate}
Having established the interaction dynamic between the replicas, we can now determine the generator of the finite  $M$-Replica Mean Field model. This is the subject of the following lemma.

\begin{Lemma} Let $\left(X_n\right)_{n \geq 0}$ be the $M$-Replica Mean Field Model. Then for any $1 \leq i \neq j \leq M$, the generator takes the form
\[
\Ge_n f (x) =  \sum_{i=1}^{M}   \sum_{j=1,j\neq i}^{M}  \sum_{y \in \{-1,1\}} \frac{\pi^j_{n_j}(x_j,x_j+y)}{M(M-1)} \left[ f ( x + y e_i) - f(x) 
\right] ,
\]
where
\begin{align*}
\pi^j_{n_j}(x_j,x_j+y)&= \frac{  x_j y (2p-1 )+ n_j}{2n_j}
\end{align*}
and $y \in \{-1,1\}$.
\end{Lemma}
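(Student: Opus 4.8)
The plan is to substitute the specific choices of the probability functions from items (i)--(iv) directly into the general form of the generator and then to carry out the only nontrivial summation, namely the one over the index $k$. First I would replace $\xi_i$ by $\frac{1}{M}$ and $\psi^i(j)$ by $\frac{1}{M-1}$ (the latter supported on $j \neq i$), so that the two outer sums produce the common prefactor $\frac{1}{M(M-1)}$ and the sum over $j$ ranges only over $j \neq i$. This already yields the announced double sum $\sum_{i=1}^M \sum_{j=1, j\neq i}^M$ together with the factor $\frac{1}{M(M-1)}$. Using item (iii), $\phi^j_{n_j}(k) = \frac{1}{n_j}$, it then remains to identify the quantity
\[
\sum_{k=1}^{n_j} \phi^j_{n_j}(k)\, p^{i,j}(y) = \frac{1}{n_j}\sum_{k=1}^{n_j} p^{i,j}(y)
\]
with $\pi^j_{n_j}(x_j, x_j+y)$.

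The key computation reduces to recognizing that averaging rule $(D_2)$ over a uniformly chosen past step of the $j$-th elephant reproduces exactly the single-ERW transition kernel of equation (\ref{conditional}), now applied to the trajectory of elephant $j$. Concretely, when the $k$-th step $\eta_k^j$ is selected, rule $(D_2)$ dictates that the $i$-th elephant moves in direction $y$ with probability $p$ if $\eta_k^j = y$ and with probability $1-p$ if $\eta_k^j = -y$; that is, for the selected $k$ we have $p^{i,j}(y) = p\,\Ind[\eta_k^j = y] + (1-p)\,\Ind[\eta_k^j = -y]$. Summing against $\frac{1}{n_j}$ over $k$ therefore gives $\frac{1}{n_j}\big(p\, N_y^j + (1-p)\, N_{-y}^j\big)$, where $N_y^j$ denotes the number of steps of elephant $j$ equal to $y$. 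Since $N_1^j + N_{-1}^j = n_j$ and $N_1^j - N_{-1}^j = x_j$, one has $N_y^j = \frac{n_j + y x_j}{2}$, and substituting and collecting terms yields $\frac{n_j + (2p-1) x_j y}{2 n_j}$, which is precisely $\pi^j_{n_j}(x_j, x_j + y)$. Plugging this back into the double sum completes the derivation.

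I do not expect a genuine obstacle here, as the statement is essentially a bookkeeping computation; the one point deserving care is the implicit $k$-dependence of $p^{i,j}(y)$, which the compact notation hides. The symbol $p^{i,j}(y)$ must be read as the copying/anti-copying probability \emph{for the particular past step $k$ being summed}, so that it is $\eta_k^j$ rather than $y$ alone that governs whether the factor is $p$ or $1-p$. The counting identity $N_y^j = (n_j + y x_j)/2$ relies on $x_j = X_{n_j}^j = \sum_{k=1}^{n_j} \eta_k^j$ with each $\eta_k^j \in \{-1,1\}$, which is guaranteed by assumption (a) ensuring $n_j \geq 1$ and that every recorded step is a genuine $\pm 1$ move. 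Once these interpretational points are fixed, the remaining algebra is identical to the one-dimensional calculation already used to obtain (\ref{kernel1}).
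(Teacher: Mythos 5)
Your proposal is correct and follows essentially the same route as the paper: substitute the uniform choices $\xi_i = \frac{1}{M}$ and $\psi^i(j) = \frac{1}{M-1}$ to obtain the prefactor $\frac{1}{M(M-1)}$, then identify the uniform average over past steps of elephant $j$ with the single-ERW kernel $\pi^j_{n_j}$. The only difference is one of detail: the paper performs this identification by citing Lemma \ref{generator}, whereas you re-derive it explicitly via the counting identity $N_y^j = (n_j + y x_j)/2$ — which is precisely the ``simple computation'' the paper omitted when stating Lemma \ref{generator} and equation (\ref{conditional}).
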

\begin{proof}
Since we are dealing with the uniform case, it follows from Lemma \ref{generator} that
\begin{align}
\Ge_n f (x ) = \sum_{i=1}^{M} \xi_i \sum_{j=1}^{M}\psi^{i}(j) \sum_{y \in \{-1,1\}} \frac{  x_j y (2p-1 )+ n_j}{2n_j} \left[ f ( x + y e_i ) - f(x) \right] .
\end{align}
The proof follows from the fact that $\xi_i= \frac{1}{M}$ and that $\psi^{i}(j)=\frac{1}{M-1}$ for any $j \neq i$.
\end{proof}

Let $\E^{M}$ denotes the expectation operator induced by the finite $M$-Replica Mean Field model. 

Below we present the main result about the Replica Mean Field limit.

\begin{theorem} 
If $p<1/4$, then
\begin{equation*}
\limsup_{M \rightarrow  \infty}\E^{M}[\frac{ X^i_{n_i}}{n_i} ] \leq \frac{1}{2 (1-2p)}
\end{equation*}
for any $i \geq 1$. Also, if $p \geq 3/4$ and $i \geq 1$, then
\begin{equation*}
\limsup_{M \rightarrow  \infty}\E^{M}[\frac{ X^i_{n_i}}{n_i} ] \leq \frac{1}{2 (2p-1)}
\end{equation*}
\end{theorem}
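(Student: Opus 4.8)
The plan is to run the same Lyapunov/Dynkin scheme used for Theorem~\ref{positiverecurrent}, now for a single tagged replica, and to close the resulting recursion by the Poisson Hypothesis. First I would apply the $M$-RMF generator to the test function $f(x)=|x_i|$. Transporting the computation of Lemma~\ref{rec_lem} through the replica generator of the preceding lemma, and recalling that on the non-negative lattice the identity $|0\pm 1|-|0|=1$ produces a reflection term at the origin, one gets
\begin{equation*}
\Ge_n|x_i| = \frac{2p-1}{M(M-1)}\sum_{j\neq i}\frac{|x_j|}{n_j}\,\Ind_{\{x_i\neq 0\}} + \frac{1}{M}\,\Ind_{\{x_i=0\}}.
\end{equation*}
Tracking $|X^i_{n_i}|$ along the individual clock of the $i$-th elephant and applying the discrete Dynkin formula exactly as in Theorem~\ref{positiverecurrent} then yields the one-step recursion
\begin{equation*}
\E^M[|X^i_{n_i+1}|]-\E^M[|X^i_{n_i}|] = (2p-1)\,\E^M\!\Big[\tfrac{1}{M-1}\textstyle\sum_{j\neq i}\tfrac{|X^j_{n_j}|}{n_j}\,\Ind_{\{X^i_{n_i}\neq 0\}}\Big] + \P^M[X^i_{n_i}=0].
\end{equation*}

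Next I would invoke the Poisson Hypothesis. As $M\to\infty$ the replicas become asymptotically independent, so the empirical average $\frac{1}{M-1}\sum_{j\neq i}|X^j_{n_j}|/n_j$ concentrates on its mean, which by exchangeability coincides with $\E^M[|X^i_{n_i}|/n_i]$ in the limit; this decouples the right-hand side above into the single unknown speed $v:=\limsup_{M\to\infty}\E^M[|X^i_{n_i}|/n_i]$. To pass from positions to the ratio I would use the identity $R_{n+1}-R_n=\frac{1}{n+1}(\Delta_n-R_n)$ with $R_n=|X^i_n|/n$ and $\Delta_n=|X^i_{n+1}|-|X^i_n|$, take expectations, and evaluate along a subsequence of clocks realizing the $\limsup$ (where the increment of $\E^M[R_n]$ is non-negative). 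After decoupling, the copied-step contribution equals $(2p-1)v$ and the origin term is controlled by the constant $\tfrac12$, so the relation rearranges to the self-consistent inequality $|2p-1|\,v\le\tfrac12$, that is $v\le\frac{1}{2|2p-1|}$. For $p<1/4$ one has $|2p-1|=1-2p$ and the bound is $\frac{1}{2(1-2p)}$; for $p\ge 3/4$ one has $|2p-1|=2p-1$ and the bound is $\frac{1}{2(2p-1)}$, the two regimes differing only in the sign of $2p-1$ and hence being handled by the same computation.

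The hard part will be the rigorous justification of the decoupling: the Poisson Hypothesis has to be applied uniformly over the random and unequal individual clocks $n_j$, so one must control the joint law of $(X^i_{n_i},(X^j_{n_j})_{j\neq i})$ well enough to replace the empirical average by a deterministic expectation, and this is precisely where the scheduling rules (i)--(iii) and the symmetric initial-jump convention (a) enter. A secondary but genuine difficulty is the bound $\tfrac12$ on the origin term $\P^M[X^i_{n_i}=0]$ in the presence of the temporal inhomogeneity of the kernel; following the pattern of Theorem~\ref{positiverecurrent} I would introduce a stopping-time truncation and pass to the limit with Fatou's lemma, which simultaneously legitimizes moving the $\limsup$ in $M$ inside the expectation.
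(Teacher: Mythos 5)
Your route is genuinely different from the paper's proof, which applies the generator to the product exponential $V_u(x)=\prod_{i=1}^M e^{ux_i}$, factorizes via the Poisson Hypothesis, and then lets $u\to 0^+$; your generator computation for $f(x)=|x_i|$ is correct (on the non-negative lattice) and the decoupling step is in the same heuristic spirit as the paper's. The fatal problem is the last step: the self-consistent relation your scheme actually produces does not rearrange to $|2p-1|\,v\le\tfrac12$. Writing $q$ for the limiting value of $\P^M[X^i_{n_i}=0]$, your drift/ratio recursion gives at the $\limsup$
\begin{equation*}
v \;\le\; (2p-1)\,v\,(1-q)\;+\;q,
\qquad\text{i.e.}\qquad
\bigl(1-(2p-1)(1-q)\bigr)\,v\;\le\;q,
\end{equation*}
and there is no algebra extracting $|2p-1|v\le\tfrac12$ from this: for instance $(q,v)$ near $(1,1)$ satisfies the displayed relation while violating your inequality whenever $p>3/4$. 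Even granting your (unproven) bound $q\le\tfrac12$, the relation yields only $v\le\frac{1}{3-2p}$, which is weaker than the claimed $\frac{1}{2(2p-1)}$ for all $p>5/6$ and degenerates to the trivial bound $v\le 1$ as $p\to 1$, whereas the target tends to $\tfrac12$; with your literal simplification (dropping the factor $1-q$) one gets $v\le\frac{1}{4(1-p)}$, weaker than the target for every $p>3/4$. So the case $p\ge 3/4$ cannot be closed by this argument.

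What is missing is precisely the ingredient the paper's proof hinges on: the expected drift of the \emph{total} displacement per global step is bounded by one in absolute value, because exactly one replica moves by $\pm 1$ at each step; concretely, $\Ge_n\bigl(\sum_{i=1}^M x_i\bigr)=\frac{2p-1}{M}\sum_{j=1}^M x_j/n_j\in[-1,1]$. In the paper this enters through the limit $u\to 0^+$ of $\E^M\bigl[\Ge_n[V_u](X_n)\bigr]/\bigl((2p-1)(e^u-e^{-u})\bigr)$, and it is exactly what produces the constant $\frac{1}{2(2p-1)}$; your single-coordinate Lyapunov function never sees this global cancellation. Two further steps need repair even in the regime $p<1/4$ (where, incidentally, your relation would give a bound stronger than the stated one if it were justified): the claim $\P^M[X^i_{n_i}=0]\le\tfrac12$ is nowhere proven --- the stopping-time/Fatou pattern of Theorem \ref{positiverecurrent} controls hitting times, not occupation probabilities of the origin --- and identifying the concentration limit of $\frac{1}{M-1}\sum_{j\ne i}|X^j_{n_j}|/n_j$ with a $\limsup$ in $M$ (rather than a limit) is inconsistent with the inequality directions once it is multiplied by the negative coefficient $2p-1$.
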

\begin{proof}

We prove the case $p > 3/4$. The proof of the case $p < 1/4$ follows in the same lines of those of the case $p > 3/4$.

For any $u \geq  0$ and $x=(x_1, \ldots , x_M)$, let $V_u(x)=\prod_{i=1}^M e^{u  x_i}$. We    have 
\begin{align*} 
\Ge_n  [ V_{u }](x)&=\sum_{i=1}^{M}   \sum_{j=1,j\neq i}^{M}  \sum_{y \in \{-1,1\}}  \frac{\pi^j_{n_j}(x_j,x_j+y)}{M(M-1)} \left[  V_{u } ( x + y e_i ) -  V_{u }(x) \right] \\ 
  &= \sum_{j=2}^{M}  \sum_{y \in \{-1,1\}}  \frac{ y x_{n_j}(2p-1 )+n_j}{2n_jM(M-1)} \left[ e^{u y} -  1 \right] \prod_{i=1}^M e^{u  x_i} . 
\end{align*}

Using the equation above we compute the expectation of $V_u\left(X_n\right)$ with respect to $\E^{M}$ where $X_n$ is the RMF model $(X_{n_i}^{i})_{i=1}^{M}$. Then

\begin{align}\label{ode}
\E^{M} \left[ \Ge_n \left[ V_u \right] \left(X_n\right) \right] &=  (2p-1) \left[ e^{ u} -  e^{ -u}  \right] \sum_{j=1}^{M} \E^{M}[Y_j \prod_{i=1}^M e^{u X^i_{n_i}}] \\
&+ (e^{u}+e^{-u} -  2 )(M-1)\E^{M}[ \prod_{i=1}^M e^{u X^i_{n_i}}] \nonumber
\end{align}
where $Y_i := \frac{X^i_{n_i}}{n_i}$. 

Under the Poisson hypothesis we have asymptotic independence between replicas. Then, for any $j \geq 1$ we have

\begin{align*}
\lim sup_{M\rightarrow \infty}\E^{M}[Y_j \prod_{i=1}^M e^{u X^i_{n_i}}] &= \E^{\infty}[Y_je^{u X^j_{n_j}}]\E^{\infty}[\prod_{i=1, i\neq j}^M e^{u X^i_{n_i}}] \\
&= \E^{\infty}[Y_1e^{u X^1_{n_1}}]\E^{\infty}[\prod_{i = 2}^M e^{u X^i_{n_i}}]
\end{align*}
where $\E^{\infty}[f]:=\lim sup_{M\rightarrow \infty}\E^{M}[f]$.

Passing to the limit when $M\rightarrow \infty$ in  (\ref{ode}) leads to
\begin{align*}
\limsup_{M\rightarrow \infty} \E^{M} \left[ \Ge_n \left[ V_u \right] \left(X_n\right) \right] &=       (2p-1) \left[ e^{ u} -  e^{ -u}  \right]  \E^{\infty}[Y_1e^{u X^1_{n_1}}]\E^{\infty}[\prod_{i =2}^M e^{u X^i_{n_i}}] \\
&+    ( e^{u}+e^{-u} -  2 )\E^{\infty}[\prod_{i=1}^M e^{u X^i_{n_i}}]
\end{align*}
where $u\geq 0$. Observe that for any $n, M \geq 1$ we have
\begin{equation*}
\prod_{i=1}^M e^{u X^i_{n_i}} \leq e^{u n} \ \mbox{a.s.} \ 
\end{equation*}
where $n = n_1 + \ldots + n_M$. We can write 
  \begin{align*}
\E^{\infty}[Y_1e^{u X^1_{n_1}}]\E^{\infty}[\prod_{i=2}^M e^{u X^i_{n_i}}] &= \frac{ 2-e^{u}-e^{-u}  }{(2p-1)(e^{ u} -  e^{ -u} )}\E^{\infty}[\prod_{i=1}^M e^{u X^i_{n_i}}] \\
&+ \limsup_{M\rightarrow \infty} \frac{\E^{M}[\Ge_n [V_u]\left(X_n\right)]}{(2p-1)(e^{ u} -  e^{ -u})}.
\end{align*}
By passing to the limit when $u$ goes to $0$ from the right we obtain 
\begin{align*}
\limsup_{M \rightarrow  \infty}\E^{M}[\frac{ X_{n_i}}{n_i} ] &= \lim_{u\rightarrow 0}  \frac{ 2+e^{-u}-e^{u} }{e^{ u} -  e^{ -u} } \\ 
&+ \lim_{u\rightarrow 0}\frac{\limsup_{M\rightarrow \infty}\E^{M}[\Ge_n [V_u]\left(X_n\right)]}{(2p-1)(e^{ u} -  e^{ -u})} .
\end{align*}
The first term on the rhs of the inequality above equals zero. The second term equals
\begin{align*}
\lim_{u\rightarrow 0}\frac{\limsup_{M\rightarrow \infty}\E^{M}[\Ge_n [V_u]\left(X_n\right)]}{(2p-1)(e^{ u} -  e^{ -u})} &= \limsup_{M \rightarrow +\infty} \frac{\E^{M} \left[ \Ge_n \left(\sum_{i=1}^M X^i_{n_i} \right)\right]}{2(2p-1)} \\
&\leq \frac{1}{2(2p-1)}.
\end{align*}

The last inequality above follows from the fact that the generator inside the expectation is between $-1$ and $1$. Therefore
\begin{equation*}
\limsup_{M\rightarrow \infty}\E^{M}[\frac{X^i_{n_i}}{n_i} ]\leq\frac{1}{2(2p-1) }.
\end{equation*}
\end{proof}

\section*{Acknowledgements}
The authors thank an anonymous referee for a careful reading of a previous version of this work. Her/his comments and observations definitively help to improve the quality of the presentation. This work was partially supported by FAPESP (2017/10555-0). The authors are grateful to the organizers of the 2019 Brazilian School of Probability where this work began.

\end{document}